\newfont{\footsc}{cmcsc10 at 8truept}
\newfont{\footbf}{cmbx10 at 8truept}
\newfont{\footrm}{cmr10 at 10truept}
\theoremstyle{plain}
\newtheorem{theorem}{Theorem}[section]
\newtheorem{proposition}[theorem]{Proposition}
\newtheorem{lemma}[theorem]{Lemma}
\newtheorem{example}[theorem]{Example}
\newtheorem{corollary}[theorem]{Corollary}
\theoremstyle{definition}
\newtheorem{remark}[theorem]{Remark}
\DeclareMathOperator{\spn}{span}
\def\bs0{\bf 0}
\title{ A Short Note on Compact Embeddings of Reproducing Kernel Hilbert Spaces in $L^2$ and Infinite-variate Function Approximation.}
\author{Marcin Wnuk \footnote{Universit\"at Osnabr\"uck, Institut f\"ur Mathematik, Albrechtstr. 28a, D-49076, Osnabr\"uck,  
e-mail: marcin.wnuk@uni-osnabrueck.de }}
\begin{document}

\maketitle
\begin{abstract}
This note consists of two largely independent parts. 
In the first part we give conditions on the kernel $k: \Omega \times \Omega \rightarrow \mathbb{R}$ of a reproducing kernel Hilbert space $H$
continuously embedded via the identity mapping into $L^2(\Omega, \mu),$
which are equivalent to the fact that $H$ is even compactly embedded into $L^2(\Omega, \mu).$ 

In the second part we consider a scenario from infinite-variate $L^2$-approximation.
 Suppose that the embedding of a reproducing kernel Hilbert space of univariate functions with reproducing kernel $1+k$ into $L^2(\Omega, \mu)$ is compact. We provide a simple criterion for checking compactness of the embedding of a reproducing kernel Hilbert space with the kernel given by
 $$\sum_{u \in \mathcal{U}} \gamma_u \bigotimes_{j \in u}k,$$
 where $\mathcal{U} = \{u \subset \mathbb{N}: |u| < \infty\},$ and $(\gamma_u)_{u \in \mathcal{U}}$ is a sequence of non-negative numbers, into an appropriate $L^2$ space.
\end{abstract}

\section{Introduction and Problem Formulation}

Throughout the note we assume that all the Hilbert spaces we are considering are infinite-dimensional, since else the results are trivial. We also assume that they are separable.

Let $\Omega \neq \emptyset$ and $\mu$ be a $\sigma$-finite measure on $\Omega.$ We are considering a reproducing kernel $k: \Omega \times \Omega \rightarrow \mathbb{R}$ with the corresponding reproducing kernel Hilbert space $H = H(k).$ We assume that the identical embedding 
\begin{equation}\label{contEmb}
 S: H \rightarrow L^2(\Omega, \mu), \quad f \rightarrow f
\end{equation}
is continuous. This is exactly the case when every $f \in H$ is a representer of some equivalence class $\overline{f} \in L^2(\Omega, \mu).$
When no risk of confusion appears we denote $L^2 := L^2(\Omega, \mu).$

There is a certain interest in characterizing the situations in which $S$ is even compact. It stems from the fact that the problem of approximating $S$ (known as  $L^2$ function approximation) by deterministic algorithms in the information-based complexity framework is solvable in the worst-case setting exactly when $S$ is a compact operator. As recently shown in \cite{W22}, the same holds also for the randomized algorithms.   

It is well-known that 
\begin{equation}\label{kDiag}
\int_{\Omega} k(t,t) d\mu(t) < \infty 
\end{equation}
is equivalent to $S$ being a Hilbert-Schmidt operator. Furthermore, there is an ample supply of reproducing kernel Hilbert spaces and $L^2$ spaces for which the identical embedding is compact, but has an infinite trace, i.e. the embedding is compact, even though the condition (\ref{kDiag}) is violated. For details on those and related issues  we refer to \cite[Chapter 26]{NW12}.  
In Theorem \ref{MainTheorem} we give an if-and-only-if characterisation of the compactness of $S$ in terms of the reproducing kernel $k$.

In the recent years there was a surge of interest in investigating numerics of functions depending on inifinitely many variables, see e.g.\cite{Was12, Was12a, GHHR17} and the references therein. One of the most prominent problems is a special case of $L^2$ function approximation. The typical starting point is a reproducing kernel Hilbert space $H_{\gamma}$ and a space $L^2(\mathcal{X}, \mu^{\mathbb{N}})$, such that 
$$S_{\gamma}: H_{\gamma} \rightarrow L^2(\mathcal{X}, \mu^{\mathbb{N}}), \quad f \mapsto f,$$
is continuous.
Here $\gamma$ is a sequence of positive numbers, the so called weights, moderating the importance of different coordinates, and $\mathcal{X} \subset \Omega^{\mathbb{N}}$ is an appropriate subset of the sequence space. 
The space $H_{\gamma}$ is in a certain way build up of reproducing kernel Hilbert spaces $H$ of ``univariate" functions $f: \Omega \rightarrow \mathbb{R}.$ Assuming that the ``univariate" identical embedding $S:H \rightarrow L^2(\Omega, \mu)$ is compact we show in Theorem \ref{InfDimThm} that $S_{\gamma}$ is compact exactly when some sequence dependingonly on $\gamma$ and $\lVert S \rVert$ converges to $0.$ The setting of infinite-dimensional approximation is described in more detail in Section \ref{InfDim}.

\subsection{Embeddings of Reproducing Kernel Hilbert Spaces into $L^2$}

For the convenience of the reader we recall a few useful characterisations of compact operators.
\begin{proposition}\label{compactCharacterization}
  Let $X,Y,$ be Hilbert spaces and let $C:X \rightarrow Y$ be a linear operator. The following are equivalent: 
\begin{enumerate}
 \item $C$ is compact, i.e. the image of the unit ball of $X$ is precompact in $Y,$
 \item For every bounded sequence $(x_n)_{n}$ the sequence $(Cx_n)_n$ admits a convergent subsequence,
 \item For each sequence $(x_n)_n$ which converges weakly to $0$ the sequence $(Cx_n)_n$ converges strongly to $0,$
 \item $C^*$ is compact ( this is a special case of Schauder's Theorem).
\end{enumerate}
\end{proposition}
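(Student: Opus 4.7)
The plan is to establish the equivalence of (1), (2), (3) through the cycle $(1) \Rightarrow (2) \Rightarrow (3) \Rightarrow (2) \Rightarrow (1)$, and then handle $(1) \Leftrightarrow (4)$ separately by invoking Schauder's theorem. None of these arguments is new; the point is to organize standard ingredients (uniform boundedness, reflexivity of Hilbert spaces, Banach--Alaoglu) cleanly.

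The equivalence $(1) \Leftrightarrow (2)$ is essentially just the metric characterization of relative compactness: a subset of a complete metric space is precompact if and only if every sequence in it has a convergent subsequence. For an arbitrary bounded sequence one rescales by the supremum of the norms to reduce to the unit ball. For $(2) \Rightarrow (3)$, I would take $(x_n)$ weakly convergent to $0$. By the uniform boundedness principle $(x_n)$ is norm-bounded, and continuity of $C$ gives $Cx_n \rightharpoonup 0$. Now (2) applied to $(x_n)$ yields that every subsequence of $(Cx_n)$ admits a further subsequence converging strongly; the strong limit must coincide with the weak limit $0$. Since every subsequence has a further subsequence converging to the same limit $0$, the whole sequence $(Cx_n)$ converges strongly to $0$.

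For the return direction $(3) \Rightarrow (2)$, given a bounded sequence $(x_n) \subset X$, I would use reflexivity of $X$ (equivalently Banach--Alaoglu in Hilbert space) to extract a weakly convergent subsequence $x_{n_k} \rightharpoonup x$. Then $x_{n_k} - x \rightharpoonup 0$, so (3) gives $C(x_{n_k} - x) \to 0$, i.e.\ $Cx_{n_k} \to Cx$ strongly, producing the required convergent subsequence and closing the cycle.

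Finally, $(1) \Leftrightarrow (4)$ is precisely Schauder's theorem, which the statement already points to. In the Hilbert space setting I would sketch a self-contained argument: if $C$ is compact then so is $C^*C$ (compose a compact operator with a bounded one), and then $C^* = (C^*C) C^{-1}$ on $\mathrm{ran}(C)$ extends to a compact operator on $Y$ via the polar decomposition / spectral calculus; alternatively one approximates $C$ in operator norm by finite-rank operators $C_n$, whose adjoints $C_n^*$ are also finite-rank and approximate $C^*$. The only real subtlety throughout is the weak-to-strong passage in $(2) \Leftrightarrow (3)$, which is where reflexivity of Hilbert spaces is genuinely used; the remaining implications are essentially book-keeping.
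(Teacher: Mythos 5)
Your argument is correct, but note that the paper does not prove this proposition at all: it is stated purely as a recollection of standard facts (and condition (4) is explicitly attributed to Schauder's theorem), so there is no proof in the paper to compare against. Your cycle $(1)\Rightarrow(2)\Rightarrow(3)\Rightarrow(2)\Rightarrow(1)$ plus Schauder is the standard textbook route and is sound. Two small points are worth tightening. First, in $(2)\Rightarrow(3)$ you invoke ``continuity of $C$'' to get $Cx_n \rightharpoonup 0$, but the statement only assumes $C$ linear; either take boundedness as implicitly part of the setting (as the paper clearly does, since $S$ is a continuous embedding), or observe that (2) itself forces boundedness (an unbounded $C$ admits unit vectors $x_n$ with $\lVert Cx_n\rVert \to \infty$, and then $(Cx_n)$ has no convergent subsequence). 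Second, your first sketch of $(1)\Rightarrow(4)$ via ``$C^* = (C^*C)C^{-1}$ on $\ran(C)$'' is shaky: $C$ need not be injective, and even when it is, $C^{-1}$ is in general unbounded, so this composition does not directly make sense as written. Your alternative — approximate $C$ in norm by finite-rank $C_n$ and use $\lVert C^* - C_n^*\rVert = \lVert C - C_n\rVert$ — is the clean Hilbert-space argument; another self-contained option, which is in fact the trick the paper itself uses later in the proof of Theorem \ref{MainTheorem}, is the estimate $\lVert C^*y_n - C^*y_m\rVert^2 = \langle CC^*(y_n-y_m), y_n-y_m\rangle \leq \lVert CC^*(y_n-y_m)\rVert\,\lVert y_n-y_m\rVert$ applied to a weakly convergent subsequence.
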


We define now integral operators
\begin{equation}\label{integralOperator}
 T_{H,H}: H \rightarrow H, \quad   T_{H,L^2}: H \rightarrow L^2(\Omega, \mu), \quad T_{L^2,L^2}: L^2(\Omega, \mu) \rightarrow L^2(\Omega, \mu),
\end{equation}
all given by the same formula
$$f \mapsto \left( s \mapsto \int_{\Omega}  k(s,t)f(t) d\mu(t) \right).$$

Our main result is the following.
\begin{theorem}\label{MainTheorem}
Let $\Omega \neq \emptyset$ and 
$k: \Omega \times \Omega \rightarrow \mathbb{R}$ be a reproducing kernel on $\Omega,$ with the corresponding reproducing kernel Hilbert space $H.$ Moreover, let $\mu$ be a $\sigma$-finite measure on $\Omega,$ for which $H \subset L^2(\Omega, \mu).$ Denote by  
$$ S: H \rightarrow L^2(\Omega, \mu), \quad f \mapsto f$$
the identical embedding. Then the following conditions are equivalent:
\begin{enumerate}
 \item $S$ is compact,
 \item Each (or equivalently: one) of the operators 
 $T_{H,H}, T_{H,L^2}, T_{L^2, L^2}$
 defined in (\ref{integralOperator})
 is compact,
 \item For each bounded (in the norm) sequence $(f_n)_{n \in \mathbb{N}} \subset H$ converging pointwise to $0$ one has $$\lim_{n \rightarrow \infty}\int_{\Omega}\left( \int_{\Omega} k(s,t)f_n(t) d\mu(t)\right)^2 d\mu(s) = 0.$$
\end{enumerate}
\end{theorem}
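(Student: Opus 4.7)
My plan is to identify the three integral operators as canonical compositions of $S$ and its adjoint, and then reduce each condition to the compactness of $S$ via standard Hilbert-space arguments. A direct calculation using the reproducing property shows that the adjoint $S^{*}:L^{2}\to H$ is given by $(S^{*}g)(s)=\int_{\Omega}k(s,t)g(t)\,d\mu(t)$, and since each $f\in H$ is identified with its equivalence class in $L^{2}$ via $S$, one obtains
$$T_{H,H}=S^{*}S,\qquad T_{L^{2},L^{2}}=SS^{*},\qquad T_{H,L^{2}}=SS^{*}S.$$

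For (1)$\Leftrightarrow$(2), the forward direction is immediate: if $S$ is compact then so are $S^{*}$, $S^{*}S$, $SS^{*}$, and $SS^{*}S$ as compositions of compact operators with bounded ones. Conversely, if $T_{H,H}=S^{*}S$ is compact then $|S|=(S^{*}S)^{1/2}$ is compact by the continuous functional calculus applied to this non-negative self-adjoint operator, and the polar decomposition $S=U|S|$ gives compactness of $S$; the case $T_{L^{2},L^{2}}=SS^{*}$ is symmetric. For $T_{H,L^{2}}=SS^{*}S$, pre-composing with the bounded operator $S^{*}$ gives that $S^{*}T_{H,L^{2}}=(S^{*}S)^{2}=T_{H,H}^{2}$ is compact, and since $T_{H,H}$ is self-adjoint and non-negative the identity
$$\lVert T_{H,H}(x_{n}-x_{m})\rVert^{2}=\langle T_{H,H}^{2}(x_{n}-x_{m}),\,x_{n}-x_{m}\rangle$$
combined with compactness of $T_{H,H}^{2}$ and boundedness of $(x_{n})$ extracts a Cauchy subsequence from $(T_{H,H}x_{n})$, so $T_{H,H}$ is compact and the previous case applies.

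For (1)$\Leftrightarrow$(3), note first that the inner integral in condition (3) is $(T_{H,L^{2}}f_{n})(s)$, so the statement is $\lVert T_{H,L^{2}}f_{n}\rVert_{L^{2}}\to 0$. Next, for a bounded sequence $(f_{n})\subset H$, pointwise convergence to $0$ is equivalent to weak convergence to $0$ in $H$: one implication is the reproducing identity $f_{n}(s)=\langle f_{n},k(\cdot,s)\rangle_{H}$, and the other follows because $\spn\{k(\cdot,s):s\in\Omega\}$ is dense in $H$, so testing against this dense set together with the uniform bound $\sup_{n}\lVert f_{n}\rVert_{H}<\infty$ yields weak convergence against every $g\in H$. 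Condition (3) therefore says precisely that $T_{H,L^{2}}$ maps weakly null sequences in $H$ to norm-null sequences in $L^{2}$, which by Proposition \ref{compactCharacterization}(3) is compactness of $T_{H,L^{2}}$, equivalent to (1) by the previous step.

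The main obstacle is the reverse direction in step 2, where one has to recover compactness of $S^{*}S$ from compactness of its square (and analogously compactness of $S$ from compactness of $S^{*}S$); everything else amounts to bookkeeping once the operator identifications are fixed and the reproducing-kernel equivalence between pointwise and weak convergence is available.
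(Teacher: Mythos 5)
Your proposal is correct and follows essentially the same route as the paper: identify $T_{H,H}=S^{*}S$, $T_{L^{2},L^{2}}=SS^{*}$, $T_{H,L^{2}}=SS^{*}S$, reduce condition (2) to (1) by standard operator-theoretic arguments, and obtain (3) from the equivalence, in an RKHS, of weak convergence to $0$ with boundedness plus pointwise convergence to $0$. The only deviations are cosmetic: where the paper passes from $S^{*}S$ to $S$ via singular values and handles $SS^{*}S$ by a weakly convergent subsequence plus Cauchy--Schwarz, you use the square root and polar decomposition $S=U|S|$ and the identity $\lVert T_{H,H}x\rVert^{2}=\langle T_{H,H}^{2}x,x\rangle$ to recover compactness of $S^{*}S$ from that of its square -- both are valid.
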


\begin{proof}[Proof of Theorem \ref{MainTheorem}]
  First we prove the equivalence $(1) \Leftrightarrow (2).$
  Note that we have
  $$T_{H, H} = S^*S, \quad T_{H, L^2} = SS^*S, \quad T_{L^2,L^2} = SS^*.$$
  Now the equivalence of $(1)$ and $(2)$ follows from the general theory of operators on Hilbert spaces, however, for completness we present short arguments. $(1) \Rightarrow (2)$ follows immediately from the ideal property of compact operators. 
  Compactness of $T_{H,H}$ implies the compactness of $S,$ because the singular values of $S$ are just the square roots of eigenvalues of $T_{H,H},$ i.e. either both sequences converge to $0,$ or they both do not. 
  
  By Schauder's Theorem and what we have shown so far, compactness of $T_{L^2,L^2} = SS^* = (S^*)^* S^*$ is equivalent to the compactness of $S^*,$ and thus also to the compactness of $S.$ 
  
  Suppose now that $SS^*S$ is compact.
Let $(f_n)_{n \in \mathbb{N}}$ be any sequence from the unit ball of $H.$ There exists a weakly convergent subsequence $(f_{n_k})_{k \in \mathbb{N}}.$ Denote its weak limit by $f.$ Using the Cauchy-Schwarz inequality we obtain
 $$\lVert S^*Sf_{n_k} - S^*Sf \rVert_H^2 = \langle SS^*S(f_{n_k} - f), S(f_{n_k} - f)  \rangle_{L^2} \leq 2 \lVert SS^*S(f_{n_k} - f) \rVert_{L^2} \lVert S \rVert.$$
 Now $SS^*S$ is a compact operator between Hilbert spaces, so in particular it maps sequences which converge weakly to $0$ to sequences which converge strongly to $0.$ Thus
 $$\lim_{k \rightarrow \infty} \lVert S^{*}Sf_{n_k} - S^{*}Sf \rVert_H = 0,$$
 and the compactness of $S^*S$ follows. This in turn implies the compactness of $S.$

 Now we prove $(2) \Leftrightarrow (3).$
 We claim that $(f_n)_n \subset H$ converges weakly to $0$ exactly when it is bounded and it converges pointwise to $0.$ As $\spn\{k(s, \cdot): s \in \Omega\}$ is dense in $H,$ it follows that $\spn\{\delta_s: s \in \Omega\},$
 where $\delta_s$ denotes the evaluation functional at the point $s,$ is dense in $H'.$ Let $Q \in \spn\{\delta_s: s \in \Omega\}.$ If $\delta_s(f_n) \rightarrow 0$ for each $s \in \Omega,$ then also $Q(f_n) \rightarrow 0.$ 
 Let now $\varphi \in H'$ be arbitrary. Given an $\epsilon > 0$ we may find a $Q \in \spn\{\delta_s: s \in \Omega\}$ with $\lVert \varphi - Q \rVert < \epsilon,$ and so
 $$\limsup_{n \rightarrow \infty}|\varphi(f_n)| \leq \limsup_{n \rightarrow \infty}\left( |\varphi(f_n) - Q(f_n)| + |Q(f_n)| \right) \leq \sup_n \lVert \varphi - Q \rVert \lVert f_n \rVert,$$
 i.e. $(f_n)_n$ converges weakly to $0.$

 The other implication is obvious (recall that by the general theory each weakly convergent sequence is bounded).
 
 Let now $f \in H.$ The equivalence $(2) \Leftrightarrow (3)$ follows from
 \begin{align*}
  & \lVert T_{H,L^2}f\rVert^2_{L^2} = 
 \int_{\Omega} \left( \int_{\Omega} k(s,t)  f(t)  d\mu(t)\right)^2d\mu(s)
 \end{align*}

 \begin{corollary}
  With the notation as in Theorem \ref{MainTheorem}:
  If $k \in L^2(\Omega \times \Omega, \mu \otimes \mu )$ then the embedding $S$ is compact.
 \end{corollary}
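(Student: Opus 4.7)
The plan is to reduce the statement to the equivalence $(1) \Leftrightarrow (2)$ of Theorem \ref{MainTheorem}, applied specifically to the operator $T_{L^2, L^2}$. The observation is that the hypothesis $k \in L^2(\Omega \times \Omega, \mu \otimes \mu)$ is the classical Hilbert--Schmidt condition for the integral operator with kernel $k$.

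Concretely, I would first argue that $T_{L^2,L^2}$ is a well-defined bounded operator on $L^2(\Omega, \mu)$. For any $f \in L^2(\Omega,\mu)$, Cauchy--Schwarz together with Fubini (using $k \in L^2(\Omega \times \Omega, \mu \otimes \mu)$) gives
$$\int_{\Omega}\left(\int_{\Omega} k(s,t) f(t)\, d\mu(t) \right)^2 d\mu(s) \leq \lVert k \rVert_{L^2(\Omega \times \Omega)}^2 \lVert f \rVert_{L^2}^2,$$
so $T_{L^2,L^2}f$ exists $\mu$-a.e. and lies in $L^2(\Omega,\mu)$. Next, taking any orthonormal basis $(e_n)_{n \in \mathbb{N}}$ of $L^2(\Omega,\mu)$ (available since $L^2(\Omega,\mu)$ is separable, as we assumed throughout the note), Parseval and Fubini yield
$$\sum_{n \in \mathbb{N}} \lVert T_{L^2,L^2} e_n \rVert_{L^2}^2 = \int_{\Omega}\int_{\Omega} k(s,t)^2\, d\mu(t)\, d\mu(s) = \lVert k \rVert^2_{L^2(\Omega \times \Omega, \mu \otimes \mu)} < \infty,$$
so $T_{L^2,L^2}$ is Hilbert--Schmidt, hence in particular compact.

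Finally, I would invoke the implication $(2) \Rightarrow (1)$ of Theorem \ref{MainTheorem}: since $T_{L^2, L^2}$ is compact, so is $S$, which is the desired conclusion. There is no substantive obstacle here; the only thing to be slightly careful about is the measurability and a.e.\ definedness of $T_{L^2,L^2}f$, but this is routine given $k \in L^2(\Omega \times \Omega, \mu \otimes \mu)$ and the $\sigma$-finiteness of $\mu$.
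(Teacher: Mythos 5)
Your proposal is correct and follows essentially the same route as the paper: the paper simply cites the well-known fact that $k \in L^2(\Omega \times \Omega, \mu \otimes \mu)$ makes $T_{L^2,L^2}$ compact (which you verify explicitly via the Hilbert--Schmidt computation) and then invokes the equivalence of (1) and (2) in Theorem \ref{MainTheorem}, exactly as you do.
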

\begin{proof}
This follows from the well-known fact that $k \in L^2(\Omega \times \Omega, \mu \otimes \mu)$ implies the compactness of $T_{L^2,L^2},$ and from the equivalence of 1. and 2. in Theorem \ref{MainTheorem}.
\end{proof}

\begin{example}
 We show that if $k \notin L^2(\Omega \times \Omega, \mu \otimes \mu)$ then a priori we cannot say anything about the compactness of the embedding $S.$ To this end consider two finite measures $\nu = (\nu_i)_{i \in \mathbb{N}}$ and $\mu = (\mu_i)_{i \in \mathbb{N}}$ on $\mathbb{N}$ assigning a positive value to each natural number. We let $H = \ell^2(\nu),$ i.e.
 $$S: \ell^2(\nu) \rightarrow \ell^2(\mu), \quad f \mapsto f.$$
 Note that the reproducing kernel of $H$ is given by
 $$k(i,j) = \frac{\delta_{i,j}}{\nu_i}. $$
 One can easily calculate that
 \begin{itemize}
  \item $k \in L^2(\mathbb{N} \times \mathbb{N}, \mu \otimes \mu)$ if and only if $\sum_{i = 1}^{\infty} \left(   \frac{\mu_i}{\nu_i}\right)^2 < \infty,$
  \item $S$ is bounded if and only if $\sup_{i \in \mathbb{N}} \frac{\mu_i}{\nu_i} < \infty,$
\item $S$ is compact if and only if $\lim_{n \rightarrow \infty} \frac{\mu_i}{\nu_i} = 0.$
\end{itemize}
An example when $S$ is continuous but not compact is given e.g. by putting $\nu = \mu.$ On the other hand, putting
$$\mu_i = \frac{1}{i^2}, \quad \nu_i = \frac{\log(i+1)}{i^2}, \quad i \in \mathbb{N},$$ we see that
$$\lim_{i \rightarrow \infty} \frac{\mu_i}{\nu_i} = \lim_{i \rightarrow \infty} \frac{1}{\log(i+1)} = 0,$$
i.e. $S$ is compact, even though
$$\sum_{i = 1}^{\infty} \left( \frac{\mu_i}{\nu_i} \right)^2 = \sum_{i = 1}^{\infty} \frac{1}{\log(i+1)^2} = \infty,$$
i.e. $k \notin L^2(\mathbb{N} \times \mathbb{N}, \mu \otimes \mu).$
\end{example}

\end{proof}

\section{Infinite-variate function approximation.}\label{InfDim}
We shortly describe the typical setting for infinite-variate function approximation.
Let $\Omega \neq \emptyset$ and let $\mu$ be a \emph{probability} measure on $\Omega.$ Let $k$ be a reproducing kernel such that the corresponding Hilbert space $H = H(k)$ is compactly embedded into $L^2(\Omega, \mu)$. 

We additionally assume that the only constant function in $H(k)$ is the zero function. 
We treat $H$ as a space of univariate functions. Based on it we build up a space of functions with infinitely many variables. To this end denote
$$\mathcal{U} = \{u \subset \mathbb{N} : |u| < \infty\}.$$
For $u \in \mathcal{U}$ we put
$$k_u: \Omega^{\mathbb{N}} \times \Omega^{\mathbb{N}} \rightarrow \mathbb{R}, \quad (x,y) \mapsto \prod_{j \in u} k(x_j,y_j).$$ Denote
$$H_u = H(k_u).$$
For a family of non-negative numbers $\gamma = (\gamma_u)_{u \in \mathcal{U}},$ called weights, we set
$$\mathcal{U}_{\gamma} = \{u \in \mathcal{U} : \gamma_u > 0\}.$$
Put
$$\mathcal{X} = \{ x \in \Omega^{\mathbb{N}} : \sum_{u \in \mathcal{U}_{\gamma}} \gamma_u k_u(x,x) < \infty  \}.$$
We define a reproducing kernel $K_{\gamma}$ on $\mathcal{X}$ via
\begin{equation}\label{InfDimKernel}
K_{\gamma}: \mathcal{X} \times \mathcal{X} \rightarrow \mathbb{R}, \quad (x,y) \mapsto \sum_{u \in \mathcal{U}_{\gamma}} \gamma_u k_{u}(x,y).
\end{equation}
The corresponding reproducing kernel Hilbert space is denoted by $H_{\gamma}.$ We have
$$H_{\gamma} = \bigoplus_{u \in \mathcal{U}_{\gamma}} H_u.$$
This means that each function $f \in H_{\gamma}$ admits a unique decomposition
$$f = \sum_{u \in \mathcal{U}_{\gamma}} f_u, \quad f_u \in H_u.$$
The norm of $f$ is then given by
$$\lVert f \rVert_{H_{\gamma}}^2 = \sum_{u \in \mathcal{U}_{\gamma}} \frac{1}{\gamma_u} \lVert f_u \rVert_{H_u}^2.$$
We denote the univariate embedding
$$S: H \rightarrow L^2(\Omega, \mu).$$
We are interested in the compactness of the embedding
\begin{equation}\label{InfDimEmb}
S_{\gamma}: H_{\gamma} \rightarrow L^2\left(\mathcal{X}, \mu^{\mathbb{N}}\right). 
\end{equation}

\begin{theorem}\label{InfDimThm}
 Let  the univariate embedding $S$ be compact. Then the following conditions are equivalent:
 \begin{enumerate}
  \item $S_{\gamma}$ is compact,
  \item $\limsup_{j \rightarrow \infty} \gamma_{u_j} \lVert S \rVert^{2|u_j|} = 0,$ where $(u_j)_{j \in \mathbb{N}}$ is some enumeration of the elements of $\mathcal{U}_{\gamma}.$
 \end{enumerate}
\end{theorem}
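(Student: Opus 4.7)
The plan is to reduce the problem to a block analysis. Each subspace $H_u \subset H_{\gamma}$, with its rescaled norm $\lVert \cdot \rVert_{H_u}/\sqrt{\gamma_u}$ inherited from $H_{\gamma}$, embeds into $L^2(\mathcal{X}, \mu^{\mathbb{N}})$ as the composition of the canonical identification $H_u \cong H^{\otimes |u|}$, the tensor-product map $S^{\otimes |u|} \colon H^{\otimes |u|} \to L^2(\Omega^{|u|}, \mu^{|u|})$, and the isometric inclusion $L^2(\Omega^{|u|}, \mu^{|u|}) \hookrightarrow L^2(\mathcal{X}, \mu^{\mathbb{N}})$ (isometric because $\mu$ is a probability measure and elements of $H_u$ depend only on the coordinates indexed by $u$). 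Tensor products of compact operators are compact, and a short SVD computation gives $\lVert S^{\otimes |u|} \rVert = \lVert S \rVert^{|u|}$; hence each block embedding is compact, with operator norm $\sqrt{\gamma_u}\,\lVert S \rVert^{|u|}$ when the domain is viewed as a subspace of $H_{\gamma}$. Under the assumption on constants in $H$---strengthened, if necessary, to the ANOVA-type property $\int k(s,t)\, d\mu(t) = 0$---distinct blocks $H_u$ and $H_v$ are orthogonal not only in $H_{\gamma}$ but also in $L^2(\mathcal{X}, \mu^{\mathbb{N}})$.

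For $(1) \Rightarrow (2)$ I would argue by contrapositive. Assuming the $\limsup$ condition fails, there exist $\varepsilon > 0$ and a subsequence $(u_{j_k})_k$ with $\gamma_{u_{j_k}} \lVert S \rVert^{2|u_{j_k}|} > \varepsilon$; for each $k$ pick $f_k \in H$ with $\lVert f_k \rVert_H = 1$ and $\lVert S f_k \rVert_{L^2}$ so close to $\lVert S \rVert$ that $\bigl(\lVert S f_k \rVert_{L^2}/\lVert S \rVert\bigr)^{|u_{j_k}|} \geq 1/\sqrt{2}$. The functions $g_k := \sqrt{\gamma_{u_{j_k}}}\, f_k^{\otimes u_{j_k}} \in H_{u_{j_k}}$ form an orthonormal sequence in $H_{\gamma}$ (distinct $H_u$ are orthogonal in $H_{\gamma}$ by construction), hence converge weakly to zero, yet $\lVert S_{\gamma} g_k \rVert_{L^2} = \sqrt{\gamma_{u_{j_k}}}\, \lVert S f_k \rVert_{L^2}^{|u_{j_k}|} \geq \sqrt{\varepsilon/2}$. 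By the weak-to-strong characterisation in Proposition~\ref{compactCharacterization}, $S_{\gamma}$ cannot be compact.

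For $(2) \Rightarrow (1)$ I would approximate $S_{\gamma}$ in operator norm by compact operators. Enumerate $\mathcal{U}_{\gamma} = \{u_1, u_2, \ldots\}$ and let $P_n$ denote the orthogonal projection of $H_{\gamma}$ onto $\bigoplus_{j \leq n} H_{u_j}$. Then $S_{\gamma} P_n$ is a finite sum of compact block embeddings and therefore compact. Using the $L^2$-orthogonality of distinct blocks,
\[
\lVert (S_{\gamma} - S_{\gamma} P_n) f \rVert_{L^2}^2 = \sum_{j > n} \lVert f_{u_j} \rVert_{L^2}^2 \leq \sum_{j > n} \lVert S \rVert^{2|u_j|}\,\lVert f_{u_j} \rVert_{H_{u_j}}^2 \leq \Bigl( \sup_{j > n} \gamma_{u_j}\lVert S \rVert^{2|u_j|} \Bigr) \lVert f \rVert_{H_{\gamma}}^2,
\]
so $\lVert S_{\gamma} - S_{\gamma} P_n \rVert \to 0$ by (2), and $S_{\gamma}$ is the norm-limit of compact operators.

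The principal obstacle is the $L^2$-orthogonality of the blocks $H_u$: it is what allows the tail estimate above to collapse from a series to a supremum, and it is precisely what distinguishes the vanishing-$\limsup$ hypothesis (2) from the strictly stronger summability $\sum_u \gamma_u \lVert S \rVert^{2|u|} < \infty$. The assumption as literally stated---that the only constant in $H$ is $0$---is not quite the same as orthogonality to constants in $L^2$, so I would verify that the intended condition is in fact $\int f\, d\mu = 0$ for every $f \in H$ (as foreshadowed by the earlier commented remark on ANOVA-kernels), under which the block orthogonality in $L^2$ becomes a direct Fubini computation on the product structure of $k_u$.
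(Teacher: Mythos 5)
Your proof is correct (modulo the same hidden hypothesis the paper itself relies on) and follows the same basic architecture: the block decomposition $H_{\gamma}=\bigoplus_u H_u$, the identity $\lVert S_{\gamma}|_{H_u}\rVert=\sqrt{\gamma_u}\,\lVert S\rVert^{|u|}$ via tensor powers, and for $(1)\Rightarrow(2)$ an orthonormal sequence of near-maximizers in the blocks $H_{u_{j_k}}$ that is weakly null but not $L^2$-null — this direction is essentially identical to the paper's. The only real divergence is in $(2)\Rightarrow(1)$: you exhibit $S_{\gamma}$ as an operator-norm limit of the compact operators $S_{\gamma}P_n$, whereas the paper takes a sequence in the unit ball, extracts by a diagonal argument a subsequence along which all $S_{\gamma}P_nf_{j_k}$ converge, and then runs a Cauchy/$3\varepsilon$ argument; both conclusions rest on exactly the same tail estimate $\lVert S_{\gamma}(\mathrm{Id}-P_n)\rVert\le\sup_{j>n}\sqrt{\gamma_{u_j}}\lVert S\rVert^{|u_j|}$ (in the paper it appears as the bound $\lVert S_{\gamma}(P_n-P_m)f_{j_k}\rVert\le\sqrt{\gamma_{u_{m+1}}}\lVert S\rVert^{|u_{m+1}|}$), so your route is a streamlining rather than a new idea, and it has the merit of making the key inequality appear once, cleanly, as an operator-norm statement. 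Your closing observation is also well taken: collapsing the tail from a sum to a supremum requires the blocks $H_u$ to be orthogonal in $L^2(\mathcal{X},\mu^{\mathbb{N}})$, which needs the ANOVA-type condition $\int_\Omega f\,d\mu=0$ for $f\in H$ and not merely the stated assumption that $H$ contains no nonzero constants; the paper uses this orthogonality implicitly in the displayed tail bound (and in the ordering of the block norms), so flagging and strengthening the hypothesis, as you do, is the right call rather than a defect of your argument.
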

\begin{proof}
 $(1) \Rightarrow (2).$ Suppose that $\limsup_{j \rightarrow \infty} \gamma_{u_j} C^{2|u_j|} = \epsilon > 0.$ In this case we will expose a sequence $(e_j)_{j \in \mathbb{N}}$ in $H_{\gamma}$ converging weakly to $0,$ for which $(S_{\gamma}e_j)_{j \in \mathbb{N}}$ does not converge strongly to $0.$ This will mean that $S_{\gamma}$ cannot be compact. To this end for $j \in \mathbb{N}$ let $e_j$ be any vector from $H_{u_j}$ satisfying
 $$\lVert e_j \rVert_{H_{\gamma}} = 1, \quad \lVert e_j \rVert_{L^2}^2 > \frac{1}{2}\gamma_{u_j} \lVert S \rVert^{2|u_j|}.$$
 Such a vector exists, since $\sqrt{\gamma_{u_j}}\lVert S \rVert^{|u_j|}$ is the norm of the identical embedding of $H_{u_j}$ into $L^2(\Omega^{|u_j|}, \mu^{|u_j|}),$ if we equip $H_{u_j}$ with the norm induced by $H_{\gamma}.$
As $H_{\gamma}$ is an orthogonal sum of $H_{u_j}, j \in \mathbb{N},$ the sequence $(e_j)_j$ forms an orthonormal system in $H_{\gamma}$. Thus for any $f \in H_{\gamma}$
$$ \sum_{j = 1}^{\infty} |\langle f , e_j \rangle_{H_{\gamma}}|^2  < \infty,$$
and so 
$$\lim_{j \rightarrow \infty} |\langle f , e_j \rangle_{H_{\gamma}}| = 0.$$
It follows that $(e_j)_j$ indeed converges weakly to $0.$ On the other hand there is a subsequence $(e_{j_k})_k$ such that the $L^2$-norms of all the $e_{j_k}$ are bounded away from $0$ by $\frac{1}{2}\epsilon,$ so $(e_j)_j$ cannot converge to $0$ in $L^2.$ 

$(2) \Rightarrow (1).$ Suppose that $(2)$ holds and let $(f_j)_{j \in \mathbb{N}}$ be any sequence in the unit ball of $H_{\gamma}.$ We will show that $(S_{\gamma}f_j)_{j \in \mathbb{N}}$ admits a convergent subsequence.
First of all note that $(2)$ actualy implies $\lim_{n \rightarrow \infty}\gamma_{u_j} \lVert S \rVert^{2|u_j|} = 0.$
Denote
$$H_n := \bigoplus_{j \leq n} H_{u_j},$$
and equip $H_n$ with the norm induced from $H_{\gamma}.$ Write
$$P_n: H_{\gamma} \rightarrow H_{\gamma}$$
for the orthogonal projection onto $H_n.$ Note that from $\sup_{j} \lVert f_j \rVert \leq 1$ we obtain
$$\sup_{j} \lVert P_n f_j \rVert \leq 1.$$
The univariate embedding $S$ was compact. Moreover, finite sums and tensor products of compact operators are also compact, and so we obtain that for each $n$ the sequence $(S_{\gamma}P_nf_j)_{j}$ admits a convergent subsequence. Use now the diagonal method to conclude that there is a subsequence $(f_{j_k})_{k \in \mathbb{N}}$ such that
$(S_{\gamma}P_nf_{j_k})_k$ converges for all $n.$ We claim that $(S_{\gamma}f_{j_k})_k$ is also convergent.
Put
$$g_n : = \lim_{k \rightarrow \infty} S_{\gamma}P_nf_{j_k}, \quad n \in \mathbb{N}.$$
We claim that 
\begin{enumerate}
 \item The sequence $(g_n)_n$ converges to some point $g$ in $L^2(\mathcal{X}, \mu^{\mathbb{N}}),$
 \item $\lim_{k \rightarrow \infty} S_{\gamma}f_{j_k} = g.$
 \end{enumerate}
We start by proving the first statement. Choose an $\epsilon > 0.$ For $n,m,k$ (for notational reasons we assume $n > m$) large enough we have
$$\lVert g_n - g_m \rVert_{L^2} \leq \lVert S_{\gamma}P_n f_{j_k} - S_{\gamma}P_m f_{j_k} \rVert_{L^2} + \frac{\epsilon}{2} = \lVert S_{\gamma}(P_n - P_m)f_{j_k} \rVert + \frac{\epsilon}{2} \leq \sqrt{\gamma_{u_{m+1}}}\lVert S \rVert^{|u_{m+1}|} + \frac{\epsilon}{2}.$$
 Thus $(g_n)_n$ is a Cauchy sequence, and the first statement follows.
 
 Now to the second statement. Choose an $\epsilon > 0.$ We can write
 $$\lVert S_{\gamma}f_{j_k} - g \rVert_{L^2} \leq \lVert S_{\gamma}f_{j_k} - S_{\gamma}P_n f_{j_k} \rVert_{L^2} + \lVert S_{\gamma}P_n f_{j_k} - g_n \rVert_{L^2} +\lVert g_n - g  \rVert_{L^2}.$$
 Take an $n$ large enough so that $\lVert g_n - g \rVert_{L^2} < \frac{\epsilon}{3}$ and $\sqrt{\gamma_{u_{n+1}}}\lVert S \rVert^{|u_{n+1}|} < \frac{\epsilon}{3}.$ This makes the first and the third summand small. Given this $n$ we may now chosse a $K$ satisfying $\lVert S_{\gamma}P_n f_{j_k} - g_n \rVert_{L^2} < \frac{\epsilon}{3}$ for all $k \geq K.$ Thus we have shown
 $$\lVert  S_{\gamma}f_{j_k} - g \rVert_{L^2} < \epsilon$$
 for all $k$ large enough. The second statement follows.
\end{proof}

\subsection*{Acknowledgements}
The author would like to thank Michael Gnewuch for helpful comments.
\bibliographystyle{siam}
\bibliography{References}

\begin{thebibliography}{1}

\bibitem{GHHR17}
{\sc M.~Gnewuch, M.~Hefter, A.~Hinrichs, and K.~Ritter}, {\em Embeddings of
  weighted {H}ilbert spaces and applications to multivariate and
  infinite-dimensional integration}, Journal of Approximation Theory, 222
  (2017), pp.~8--39.

\bibitem{NW12}
{\sc E.~Novak and H.~Wo\'zniakowski}, {\em Tractability of {M}ultivariate
  {P}roblems. Vol. 3: {S}tandard {I}nformation for {O}perators}, EMS Tracts in
  Mathematics, European Mathematical Society (EMS), Z\"urich, 2012.

\bibitem{Was12a}
{\sc G.~W. Wasilkowski}, {\em Liberating the dimension for function
  approximation and integration}, in Monte Carlo and Quasi-Monte Carlo Methods
  2012, L.~Plaskota and H.~Wo\'zniakowski, eds., Springer, Heidelberg, 2012,
  pp.~211--231.

\bibitem{Was12}
{\sc G.~W. Wasilkowski}, {\em Liberating the dimension for l2-approximation},
  J. Complexity, 28 (2012), pp.~304--319.

\bibitem{W22}
{\sc M.~Wnuk}, {\em Solvability by randomized algorithms}.
\newblock In preparation.

\end{thebibliography}
\end{document}